\newtheorem{theorem}{Theorem}[section]
\newtheorem{proposition}[theorem]{Proposition}
\newtheorem{corollary}[theorem]{Corollary}
\newtheorem{definition}[theorem]{Definition}
\theoremstyle{definition}
\newtheorem{example}[theorem]{Example}
\newtheorem{remark}[theorem]{Remark}
\numberwithin{equation}{section}
\def\&{\wedge}
\newcommand{\alp}{\alpha}
\newcommand{\kap}{\kappa}
\newcommand{\A}{\mathbb{A}}
\newcommand{\E}{\mathbb{E}}
\newcommand{\R}{\mathbb{R}}
\newcommand{\e}{\mathbf{e}}
\newcommand{\bfv}{\mathbf{v}}
\newcommand{\bv}{\mathbf{v}}
\newcommand{\bfx}{\mathbf{x}}
\newcommand{\bb}{\mathbb}
\begin{document}

\title{A Tale of Two Arc Lengths: Metric notions for curves in surfaces in equiaffine space}

\author{Jeanne N. Clelland}
\address{Department of Mathematics, 395 UCB, University of
Colorado,
Boulder, CO 80309-0395}
\email{Jeanne.Clelland@colorado.edu}
\author{Edward Estrada}
\address{Department of Physics, 390 UCB, University of Colorado, Boulder, CO, 80309-0390}
\email{Edward.Estrada@colorado.edu}
\author{Molly May}
\address{Department of Physics, 390 UCB, University of Colorado, Boulder, CO, 80309-0390}
\email{Molly.May@colorado.edu}
\author{Jonah Miller}
\address{Department of Physics, 390 UCB, University of Colorado, Boulder, CO, 80309-0390}
\email{Jonah.Miller@colorado.edu}
\author{Sean Peneyra}
\address{Department of Physics, 390 UCB, University of Colorado, Boulder, CO, 80309-0390}
\email{peneyra.s@gmail.com}
\author{Michael Schmidt}
\address{Department of Physics, 390 UCB, University of Colorado, Boulder, CO, 80309-0390}
\email{Michael.Schmidt@colorado.edu}

\subjclass[2010]{Primary(53A15, 53A55), Secondary(53A04, 53A05)}
\keywords{affine curve, affine arc length, affine surface, affine first fundamental form}

\thanks{This research was supported in part by NSF grant DMS-0908456.}

\begin{abstract}
In Euclidean geometry, all metric notions (arc length for curves, the first fundamental form for surfaces, etc.) are derived from the Euclidean inner product on tangent vectors, and this inner product is preserved by the full symmetry group of Euclidean space (translations, rotations, and reflections).  
In equiaffine geometry, there is no invariant notion of inner product on tangent vectors that is preserved by the full equiaffine symmetry group.  Nevertheless, it is possible to define an invariant notion of arc length for nondegenerate curves, and an invariant first fundamental form for nondegenerate surfaces in equiaffine space.  This leads to two possible notions of arc length for a curve contained in a surface, and these two arc length functions do not necessarily agree.  In this paper we will derive necessary and sufficient conditions under which the two arc length functions do agree, and illustrate with examples.
\end{abstract}

\maketitle

\section{Introduction}\label{intro-sec}


The primary defining characteristic of Euclidian geometry in $\R^3$ is the presence of a flat metric $\langle , \rangle$ which is defined on all tangent vectors to all points of $\R^3$ and invariant under the action of the Euclidean group. When studying submanifolds of the Euclidean space $\E^3$ (i.e., $\R^3$ together with a Euclidean metric), all metric properties (e.g., arc lengths, surface areas) are derived from this underlying metric.  By contrast, in equiaffine geometry (which, for convenience, we will refer to simply as ``affine geometry"), it is not possible to define a metric on tangent vectors which is preserved by the action of the equiaffine group.  There is an invariant volume form, but no invariant notion of distance which can be restricted to submanifolds of $\A^3$ (i.e., $\R^3$ together with an equiaffine structure) in any obvious way.

Nevertheless, it is possible to define a notion of affine arc length for generic curves in affine space, as well as a notion of an affine metric for generic surfaces, in such a way that these notions are preserved by the action of the equiaffine group.  Because there is no inner product on tangent vectors, these affine notions of metrics on submanifolds depend on higher-order derivatives, as opposed to the analogous Euclidean notions, which depend only on first derivatives of the submanifolds in question.

Now, suppose that we have a curve $\alpha$ contained in a surface $\Sigma \subset \A^3$.  The affine metric on $\Sigma$ can be restricted to $\alpha$ in order to define an arc length function on $\alpha$.  Unlike in Euclidean geometry, it is possible for this arc length function to differ from the affine arc length function on $\alpha$ considered as a curve in $\A^3$.  The goal of this paper is to explore these two different notions of affine arc length for $\alpha$ and to consider conditions under which they may agree.

The remainder of the paper is structured as follows.  In \S \ref{preliminaries-sec} we will recall the definitions of affine arc length for nondegenerate curves in $\A^3$ and the affine first fundamental form for nondegenerate surfaces in $\A^3$.  In \S \ref{comparison-sec} we will explore how these two notions give rise to two different notions of arc length for a curve $\alpha \subset \Sigma \subset \A^3$, and we will construct several examples where these notions do not agree. In \S \ref{theorem-sec}, we will determine conditions under which these two notions do agree, and in \S \ref{examples-sec}, we will construct examples of curves on surfaces for which the two notions of arc length are the same.

\section{Notions from affine geometry}\label{preliminaries-sec}

\subsection{Curves and affine arc length}

Let $I \subset \R$, and let $\alp:I \to \R^3$ be a regular curve.  ($\alp$ may be considered as a curve in either $\E^3$ or $\A^3$.)  In Euclidean geometry, one generally associates to $\alp$ the {\em Frenet frame} $(\e_1, \e_2, \e_2)$ defined by:
\[
\e_1(t) =  \frac{\alpha'(t)}{\|\alpha'(t)\|}, \qquad
\e_2(t) =  \frac{\e_1'(t)}{\|\e_1'(t)\|}, \qquad
\e_3(t) = \e_1(t) \times \e_2(t),
\]
where for any vector $\bv$, we define $\| \bv \| = \sqrt{\langle \bv, \bv \rangle}$.
(See, e.g., \cite{doCarmo76}.) This frame is well-defined provided that the vectors $\alpha'(t), \alpha''(t)$ are linearly independent for each $t \in I$; such a curve is called a {\em nondegenerate} curve in $\E^3$.  The {\em Euclidean arc length} function $\bar{s}(t)$ of $\alpha$ is defined by
\[ \bar{s}(t) = \int_0^t \sqrt{\langle \alp'(\sigma), \alp'(\sigma) \rangle} \, d\sigma. \]
It has the property that if $\alpha$ is reparametrized via the inverse function $t(\bar{s})$ as
\[ \alpha(\bar{s}) = \alpha(t(\bar{s})), \]
then $\|\alpha'(\bar{s})\| \equiv 1$, and so the Frenet frame satisfies $\e_1(\bar{s}) = \alpha'(\bar{s})$.  Moreover, the Frenet frame satisfies the {\em Frenet equations}
\[ \begin{bmatrix} \e_1'(\bar{s}) &  \e_2'(\bar{s}) & \e_3'(\bar{s}) \end{bmatrix} = \begin{bmatrix} \e_1(\bar{s}) & \e_2(\bar{s}) & \e_3(\bar{s}) \end{bmatrix} \begin{bmatrix} 0 & -\kappa(\bar{s}) & 0 \\[0.1in] \kappa(\bar{s}) & 0 & \tau(\bar{s}) \\[0.1in] 0 & -\tau(\bar{s}) & 0 \end{bmatrix}, \]
where $\kappa(\bar{s}), \tau(\bar{s})$ are the curvature and torsion functions, respectively, of $\alpha$.
An important observation is that the matrix $[ \e_1(\bar{s}) \ \ \e_2(\bar{s}) \ \ \e_3(\bar{s}) ]$, whose columns are the Frenet frame vectors, is an element of the Lie group $SO(3)$, which is precisely the (oriented) symmetry group of the Euclidean structure on each tangent space.

Now suppose that we consider $\alpha:I \to \R^3$ as a curve in $\A^3$.  We no longer have the notions of vector norm and cross product to make use of in order to define a frame along the curve.  But if we define
\[
\e_1(t) =  \alpha'(t), \qquad
\e_2(t) =  \alpha''(t), \qquad
\e_3(t) =  \alpha'''(t),
\]
then the quantity
\[ \det [ \e_1(t) \ \ \e_2(t) \ \ \e_3(t) ] \]
is invariant under the action of the equiaffine group.  By way of analogy with the Euclidean case, it would be nice to find a parametrization of the curve for which the matrix $[ \e_1(t) \ \ \e_2(t) \ \ \e_3(t) ]$ is an element of the symmetry group $SL(3)$; i.e., for which
\[ \det [ \e_1(t) \ \ \e_2(t) \ \ \e_3(t) ] = 1. \]

This motivation leads to the following definitions (see, e.g., \cite{Su83}):
\begin{itemize}
\item A curve $\alpha:I \to \A^3$ is called {\em nondegenerate} if the vectors $\alpha'(t), \alpha''(t), \alpha'''(t)$ are linearly independent for each $t \in I$.  (Note that this is different from the Euclidean definition.)
\item The {\em affine arc length} function $s_{\alpha}(t)$ of a nondegenerate curve $\alpha$ is defined by
\begin{equation}
s_{\alp}(t) = \int_{0}^t \sqrt[6]{\det[\alp'(\sigma) \ \ \alp''(\sigma) \ \ 
\alp'''(\sigma)]}\, d\sigma .
\label{eq:s:alpha}
\end{equation}
(Note that this assumes that $\det [ \e_1(t) \ \ \e_2(t) \ \ \e_3(t) ]  >0$; if $\det [ \e_1(t) \ \ \e_2(t) \ \ \e_3(t) ]  < 0$, then $\alp(t)$ can be reparametrized as $\alp(-t)$ to reverse the sign.  In particular, note that this assumption yields a preferred orientation for $\alp$.)
\end{itemize}
If $\alp$ is reparametrized by the inverse function $t(s_{\alp})$ as
\[ \alp(s_{\alp}) = \alp(t(s_{\alp})), \]
then the vectors 
\[
\e_1(s_{\alp}) =  \alpha'(s_{\alp}), \qquad
\e_2(s_{\alp}) =  \alpha''(s_{\alp}), \qquad
\e_3(s_{\alp}) =  \alpha'''(s_{\alp})
\]
form the {\em affine Frenet frame} of $\alp$.  The affine Frenet frame has the property that the matrix $[\e_1(s_{\alp}) \ \ \e_2(s_{\alp}) \ \ \e_3(s_{\alp}) ]$ is an element of $SL(3)$, and the affine analog of the Frenet equations is
\[ \begin{bmatrix} \e_1'(s_{\alp}) & \e_2'(s_{\alp}) & \e_3'(s_{\alp}) \end{bmatrix} = \begin{bmatrix} \e_1(s_{\alp}) & \e_2(s_{\alp}) & \e_3(s_{\alp}) \end{bmatrix} \begin{bmatrix} 0 & 0 & \kappa_1(s_{\alp}) \\[0.1in] 1 & 0 & \kappa_2(s_{\alp}) \\[0.1in] 0 & 1 & 0 \end{bmatrix}, \]
where $\kappa_1(s_{\alp}), \kappa_2(s_{\alp})$ are the {\em affine curvature} functions of $\alp$.  The affine arc length, affine Frenet frame, and affine curvatures are all invariant under the action of the equiaffine group.

Affine arc length is a very different notion from Euclidean arc length.  Some of the differences include:
\begin{itemize}
\item Unlike Euclidean arc length, which 
depends only on the first derivative of $\alp$, the affine arc length 
depends on the first three derivatives of $\alp$.  In general, this number is dependent on the dimension of the ambient affine space: the affine arc length of a curve $\alp: I \to \bb{A}^n$ depends on the first $n$ derivatives of $\alp$.
\item The affine arc length is only nonzero for nondegenerate curves; so for instance, any curve contained in a plane in $\bb{A}^3$ has affine arc length zero according to this definition.  It may, however, have nonzero affine arc length when regarded as a curve in $\bb{A}^2$.  
\end{itemize}
It turns out that the affine arc length function of $\alp$ can be expressed in terms of the Euclidean invariants of $\alp$:

\begin{proposition}\label{affine-arc-length-prop}
Let $\alp:I \to \R^3$ be a nondegenerate curve. Let $\bar{s}(t)$, $\kappa(t), \tau(t)$ be the Euclidean arc length, curvature, and torsion of $\alpha$, respectively, and suppose that $\tau(t)>0$. Then the affine arc length $s_{\alp}(t)$ of $\alp$ is given by
\[ s_{\alp}(t) = \int_0^t \sqrt[6] {\kap(\sigma)^2\, \tau(\sigma)}\, \bar{s}'(\sigma)\, d\sigma. \]
\end{proposition}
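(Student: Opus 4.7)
The plan is to reduce the computation to a direct application of the classical Frenet--Serret equations by first handling the case of a Euclidean arc length parametrization and then using the chain rule to pass to an arbitrary parameter. Concretely, I would first reparametrize $\alpha$ by its Euclidean arc length $\bar{s}$, so that the Euclidean Frenet frame $(\e_1, \e_2, \e_3)$ satisfies $\e_1 = \alpha'(\bar{s})$ and the Frenet equations stated in the excerpt hold. Differentiating gives
\[
\alpha'(\bar{s}) = \e_1, \qquad \alpha''(\bar{s}) = \kap\, \e_2, \qquad \alpha'''(\bar{s}) = -\kap^2\, \e_1 + \kap'\, \e_2 + \kap\tau\, \e_3,
\]
and since $\det[\e_1\ \e_2\ \e_3] = 1$, expanding the determinant (only the $\e_3$ term in $\alpha'''$ contributes) yields
\[
\det\bigl[\alpha'(\bar{s})\ \ \alpha''(\bar{s})\ \ \alpha'''(\bar{s})\bigr] = \kap(\bar{s})^2\, \tau(\bar{s}).
\]

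Next I would pass back to the original parameter $t$ via the change of variables $\bar{s} = \bar{s}(t)$ with $\bar{s}'(t) = \|\alpha'(t)\|$. Applying the chain rule repeatedly gives
\[
\alpha'(t) = \bar{s}'\, \alpha'(\bar{s}), \quad \alpha''(t) = (\bar{s}')^2\, \alpha''(\bar{s}) + \bar{s}''\, \alpha'(\bar{s}), \quad \alpha'''(t) = (\bar{s}')^3\, \alpha'''(\bar{s}) + 3\bar{s}'\bar{s}''\, \alpha''(\bar{s}) + \bar{s}'''\, \alpha'(\bar{s}).
\]
In matrix form,
\[
\bigl[\alpha'(t)\ \ \alpha''(t)\ \ \alpha'''(t)\bigr] = \bigl[\alpha'(\bar{s})\ \ \alpha''(\bar{s})\ \ \alpha'''(\bar{s})\bigr] \cdot M(t),
\]
where $M(t)$ is upper triangular with diagonal entries $\bar{s}'$, $(\bar{s}')^2$, $(\bar{s}')^3$. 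Hence $\det M(t) = (\bar{s}')^{6}$, and combining with the previous step gives
\[
\det\bigl[\alpha'(t)\ \ \alpha''(t)\ \ \alpha'''(t)\bigr] = \kap(t)^2\, \tau(t)\,\bigl(\bar{s}'(t)\bigr)^{6}.
\]

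Finally, taking the sixth root (justified since the assumption $\tau > 0$ together with nondegeneracy makes the integrand positive) and substituting into the definition \eqref{eq:s:alpha} gives
\[
s_\alp(t) = \int_0^t \sqrt[6]{\kap(\sigma)^2\, \tau(\sigma)}\, \bar{s}'(\sigma)\, d\sigma,
\]
as claimed. There is no substantial obstacle here; the only step requiring any care is verifying the triangular structure of $M(t)$, which is what forces the exponents $1, 2, 3$ to sum to the magic number $6$ that matches the sixth root in the definition of affine arc length.
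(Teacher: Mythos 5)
Your proof is correct and follows essentially the same route as the paper's: both express $\alpha'$, $\alpha''$, $\alpha'''$ in the Euclidean Frenet frame, note that the resulting change-of-basis matrix is triangular, and conclude that $\det[\alpha'\ \ \alpha''\ \ \alpha''']=\kappa^2\tau\,(\bar{s}')^6$. The only organizational difference is that you factor the computation through the arc-length parametrization and then multiply by the chain-rule Jacobian determinant $(\bar{s}')^6$, whereas the paper computes the three $t$-derivatives directly in terms of the frame; the content is identical.
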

In particular, if $\alp$ is parametrized by its Euclidean arc length $\bar{s}$, then
\[ s_{\alp}(\bar{s}) = \int_0^{\bar{s}} \sqrt[6] {\kap(\sigma)^2\, \tau(\sigma)}\, d\sigma. \] 

\begin{proof}
Let $(\e_1(t), \e_2(t), \e_3(t))$ denote the Euclidean Frenet frame of $\alp$, and for convenience, let $v(t)$ denote $\bar{s}'(t)$.  Then we have:
\begin{align*}
\alp'(t) & = v(t) \e_1(t), \\
\alp''(t) & = v'(t) \e_1(t) + v(t)^2 \kappa(t) \e_2(t), \\
\alp'''(t) & = \left(v''(t) - v(t)^3 \kappa(t)^2\right) \e_1(t) + \left(3 v(t) v'(t) \kappa(t) + v(t)^2 \kappa'(t)\right) \e_2(t) \\
& \qquad  + v(t)^3 \kappa(t) \tau(t) \e_3(t).
\end{align*}
Therefore,
\begin{align*}
 \sqrt[6]{\det[\alp'(t) \ \ \alp''(t) \ \ \alp'''(t)]} & = \sqrt[6]{v(t)^6 \kappa(t)^2 \tau(t) \det[\e_1(t) \ \ \e_2(t) \ \ \e_3(t)]} \\
 &  = v(t) \sqrt[6]{\kappa(t)^2\, \tau(t)} \\
 & =  \sqrt[6]{\kappa(t)^2\, \tau(t)} \bar{s}'(t).
\end{align*}
The result follows.
\end{proof}

Although the Euclidean quantities $\kap(t), \tau(t), \bar{s}(t)$ are not invariant under the action of the equiaffine group, this proposition yields the following corollary:

\begin{corollary}
The Euclidean 1-form $ds_{\alp} = \sqrt[6]{\kappa^2 \tau}\, d\bar{s}$ associated to a nondegenerate curve $\alp:I \to \R^3$ is invariant under the action of the equiaffine group.
\end{corollary}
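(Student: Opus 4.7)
The proof plan is essentially to observe that the corollary is a direct consequence of the proposition combined with what has already been established about the equiaffine invariance of the affine arc length function $s_{\alp}$. The work has, in effect, already been done; one only needs to repackage it as a statement about a 1-form rather than about its integral.

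First I would recall from the proposition that, on any nondegenerate curve $\alp: I \to \R^3$, the integrands on each side of the identity
\[ s_{\alp}(t) = \int_0^t \sqrt[6]{\kappa(\sigma)^2\, \tau(\sigma)}\, \bar{s}'(\sigma)\, d\sigma \]
agree pointwise, so that as 1-forms on $I$ one has the equality
\[ ds_{\alp} = \sqrt[6]{\kappa^2\,\tau}\, d\bar{s}. \]
Next I would note that the defining expression $s_{\alp}(t) = \int_0^t \sqrt[6]{\det[\alp'\ \alp''\ \alp''']}\, d\sigma$ is manifestly equiaffine-invariant: a transformation $\mbf{x} \mapsto A\mbf{x} + \mbf{b}$ with $A \in SL(3)$ sends each derivative $\alp^{(k)}$ to $A\alp^{(k)}$, and since $\det A = 1$, the determinant under the radical is unchanged. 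Consequently, if $\tilde\alp = A\alp + \mbf{b}$, then $s_{\tilde\alp}(t) = s_{\alp}(t)$, and therefore $ds_{\tilde\alp} = ds_{\alp}$.

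Combining the two observations, the 1-form $\sqrt[6]{\kappa^2\,\tau}\, d\bar{s}$, computed from the Euclidean invariants of $\alp$, is equal to the equiaffine-invariant 1-form $ds_{\alp}$, and hence is itself invariant under the action of the equiaffine group, even though no one of $\kappa$, $\tau$, or $\bar{s}$ is individually invariant.

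Since the whole argument is two lines, there is no real obstacle; the only thing to be mildly careful about is making explicit that the proposition yields the equality of the differentials (not merely of their integrals between fixed limits) and that the equiaffine invariance of $s_{\alp}$ follows from the $SL(3)$-invariance of the determinant appearing in its definition, which is why the sixth root is extracted from a quantity of weight $6$ under scaling of the parameter.
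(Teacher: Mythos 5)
Your proposal is correct and follows exactly the route the paper intends: the identity $ds_{\alp} = \sqrt[6]{\kappa^2\tau}\,d\bar{s}$ comes from equating integrands in Proposition \ref{affine-arc-length-prop}, and the invariance of $ds_{\alp}$ follows from the $SL(3)$-invariance of $\det[\alp'\ \alp''\ \alp''']$ in the definition of $s_{\alp}$. The paper leaves this as an immediate consequence without writing it out; your two observations are precisely the ones it relies on.
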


\subsection{Surfaces and the affine first fundamental form}

Let $U \subset \R^2$, and let $X:U \to \R^3$ be a parametrization of a regular surface $\Sigma$ (considered as a surface in either $\E^3$ or $\A^3$).  In Euclidean geometry, one associates to $X$ the {\em first fundamental form}
\[ \text{I}_{\text{Euc}} = E \, du^2 + 2F\, du\, dv + G \, dv^2, \]
where
\begin{equation}
 E = \langle X_u, X_u \rangle, \qquad F = \langle X_u, X_v \rangle, \qquad G = \langle X_v, X_v \rangle. \label{define-EFG-eqn}
\end{equation} 
The first fundamental form expresses the restriction of the Euclidean metric to $\Sigma$ as follows: for any tangent vector $\bfv$ to $\Sigma$, if we express $\bfv$ as 
\[ \bfv = a X_u + b X_v, \]
then
\[ \langle \bfv, \bfv \rangle = \text{I}_{\text{Euc}}(\bfv) = E a^2 + 2Fab + G b^2. \]

Next, one associates to $\Sigma$ the {\em second fundamental form}
\[ \text{II}_{\text{Euc}} = e \, du^2 + 2f\, du\, dv + g \, dv^2, \]
where
\begin{equation}
 e = \langle X_{uu}, N \rangle, \qquad f = \langle X_{uv}, N \rangle, \qquad g = \langle X_{vv}, N \rangle, \label{define-efg-eqn}
\end{equation}
and $N$ is a unit normal vector field to $\Sigma$.  The second fundamental form encapsulates the curvature properties of the surface; in particular, the Gauss curvature of the surface is
\begin{equation} K = \frac{\det (\text{II}_{\text{Euc}})}{\det (\text{I}_{\text{Euc}} )} = \frac{eg-f^2}{EG-F^2}. \label{define-K-eqn}
\end{equation}

Now suppose that we consider $\Sigma$ as a surface in $\A^3$.  There is no obvious analog to the Euclidean first fundamental form which is invariant under the action of the equiaffine group.  However, we can construct a quadratic form which closely approximates the Euclidean second fundamental form: if we set
\[ \ell = \det [ X_u \ \ X_v \ \ X_{uu} ],  \qquad m = \det [ X_u \ \ X_v \ \ X_{uv} ], \qquad   n = \det [ X_u \ \ X_v \ \ X_{vv} ], \]
then the quadratic form
\[ \ell\, du^2 + 2m \, du\, dv + n \, dv^2 \]
on the {\em parametrized} surface $X$ is invariant under the action of the equiaffine group.  However, it is not {\em quite} invariant under a change of parametrization for $\Sigma$: if we set
\[ \bar{X}(\bar{u}, \bar{v}) = X(u(\bar{u},\bar{v}), v(\bar{u},\bar{v})), \]
then we have
\[ \bar{\ell}\, d\bar{u}^2 + \bar{m} \, d\bar{u}\, d\bar{v} + \bar{n} \, d\bar{v}^2 = (\ell\, du^2 + 2m \, du\, dv + n \, dv^2) J, \]
where $J$ is the determinant of the Jacobian matrix of the coordinate transformation
\[ (\bar{u},\bar{v}) \to (u(\bar{u},\bar{v}), v(\bar{u},\bar{v})). \]
This indeterminacy can be remedied as follows: it is straightforward to compute that
\[ \bar{\ell}\bar{n} - \bar{m}^2 = (\ell n-m^2) J^4. \]
Therefore, if $\ell n - m^2 \neq 0$, then the quadratic form
\[ \text{I}_{\text{aff}} = |\ell n - m^2|^{-1/4}(\ell\, du^2 + 2m \, du\, dv + n \, dv^2) \]
is a well-defined, invariant quadratic form on $\Sigma$.  This quadratic form is called the {\em affine first fundamental form} of $\Sigma$, and it can be used to define a metric on the surface $\Sigma$.  Unlike in the Euclidean case, this metric is not necessarily positive definite; it may be positive or negative definite, or indefinite.

We make the following definitions (see \cite{Su83}):
\begin{itemize}
\item A surface $\Sigma$ with parametrization $X:U \to \A^3$ is called {\em nondegenerate} if the quadratic form $\ell\, du^2 + 2m \, du\, dv + n \, dv^2$ is nondegenerate (i.e., if $\ell n-m^2 \neq 0$).
\item A nondegenerate parametrized surface is called {\em elliptic} if the quadratic form $\text{I}_{\text{aff}}$ is definite and {\em hyperbolic} if $\text{I}_{\text{aff}}$ is indefinite.
\end{itemize}
Note that if $\text{I}_{\text{aff}}$ is negative definite, it can be made positive definite by interchanging the roles of $u$ and $v$.  Thus, we will assume without loss of generality that $ \text{I}_{\text{aff}}$ is positive definite in the elliptic case.

As in the case of affine arc length, the affine first fundamental form can be expressed in terms of the Euclidean invariants of $\Sigma$:

\begin{proposition}\label{affine-first-fundamental-form-prop}
Let $X:U \to \R^3$ be a regular parametrization for a nondegenerate surface $\Sigma$.  Let $\text{\em{II}}_{\text{\em{Euc}}}$ denote the Euclidean second fundamental form of $\Sigma$, and let $K$ denote the Euclidean Gauss curvature of $\Sigma$. Then
\begin{equation}
\text{\em{I}}_{\text{\em{aff}}} = |K|^{-1/4} \text{\em{II}}_{\text{\em{Euc}}}.
\label{eq:Iaff:KIIeuc}
\end{equation}
\end{proposition}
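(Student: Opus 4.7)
The plan is to show the equality by unwrapping the determinantal definitions of $\ell, m, n$ and relating them directly to the Euclidean second fundamental form coefficients $e, f, g$.

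First I would use the standard identity for the scalar triple product, namely that $\det[X_u\ X_v\ W] = (X_u \times X_v) \cdot W$ for any vector $W$. Since $X_u \times X_v$ is normal to $\Sigma$ with magnitude $\sqrt{EG - F^2}$, we have $X_u \times X_v = \sqrt{EG - F^2}\, N$, where $N$ is the unit normal used in the definition of $\text{II}_{\text{Euc}}$. Applying this with $W = X_{uu}, X_{uv}, X_{vv}$ in turn and invoking the definitions \eqref{define-EFG-eqn}--\eqref{define-efg-eqn} yields
\begin{equation*}
\ell = \sqrt{EG - F^2}\, e, \qquad m = \sqrt{EG - F^2}\, f, \qquad n = \sqrt{EG - F^2}\, g.
\end{equation*}
In particular, the unnormalized quadratic form satisfies
\begin{equation*}
\ell\, du^2 + 2m\, du\, dv + n\, dv^2 = \sqrt{EG - F^2}\; \text{II}_{\text{Euc}}.
\end{equation*}

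Next I would compute the discriminant: from the relations above and the definition \eqref{define-K-eqn} of Gauss curvature,
\begin{equation*}
\ell n - m^2 = (EG - F^2)(eg - f^2) = (EG - F^2)^2 \, K,
\end{equation*}
so that $|\ell n - m^2|^{1/4} = (EG - F^2)^{1/2}\, |K|^{1/4}$ (using $EG - F^2 > 0$ for a regular surface).

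Finally, substituting these two identities into the definition of $\text{I}_{\text{aff}}$ gives
\begin{equation*}
\text{I}_{\text{aff}} = |\ell n - m^2|^{-1/4}\bigl(\ell\, du^2 + 2m\, du\, dv + n\, dv^2\bigr) = \frac{\sqrt{EG - F^2}}{(EG - F^2)^{1/2}\, |K|^{1/4}}\; \text{II}_{\text{Euc}} = |K|^{-1/4}\, \text{II}_{\text{Euc}},
\end{equation*}
which is the desired formula \eqref{eq:Iaff:KIIeuc}. There is no real obstacle here; the only thing to keep track of is the factor $\sqrt{EG-F^2}$ extracted from each triple product, and the fact that it cancels cleanly with the $(EG-F^2)^{1/2}$ arising from the fourth-root normalization. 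The nondegeneracy hypothesis $\ell n - m^2 \neq 0$ is equivalent to $K \neq 0$, so the expression $|K|^{-1/4}$ is well defined throughout.
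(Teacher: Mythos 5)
Your proposal is correct and follows essentially the same route as the paper: both reduce $\ell, m, n$ to $\sqrt{EG-F^2}\,e$, $\sqrt{EG-F^2}\,f$, $\sqrt{EG-F^2}\,g$ (you via the scalar triple product, the paper by replacing each second derivative with its normal component inside the determinant), and then cancel the resulting $\sqrt{EG-F^2}$ factors against the fourth root of $\ell n - m^2 = (EG-F^2)^2 K$. No gaps.
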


\begin{proof}
Let $N$ be the Euclidean normal vector field to $\Sigma$. Then equations \eqref{define-EFG-eqn}, \eqref{define-efg-eqn} and standard properties of determinants imply that
\begin{align*}
\ell & = \det [ X_u \ \ X_v \ \ X_{uu} ] = \det[X_u \ \ X_v \ \ eN] = e \sqrt{EG-F^2} \\
m & = \det [ X_u \ \ X_v \ \ X_{uv} ] = \det[X_u \ \ X_v \ \ fN] = f \sqrt{EG-F^2} \\
n & = \det [ X_u \ \ X_v \ \ X_{vv} ] = \det[X_u \ \ X_v \ \ gN] = g \sqrt{EG-F^2}.
\end{align*}
Therefore, by equation \eqref{define-K-eqn},
\begin{align*}
\text{I}_{\text{aff}} & = |\ell n - m^2|^{-1/4}(\ell\, du^2 + 2m \, du\, dv + n \, dv^2) \\
& = |eg-f^2|^{-1/4} |EG-F^2|^{1/4} (e\, du^2 + 2f \, du\, dv + g \, dv^2) \\
& = |K|^{-1/4} \text{II}_{\text{Euc}}.
\end{align*}
\end{proof}

\begin{corollary}
The Euclidean quadratic form $\text{\em{I}}_{\text{\em{aff}}} = |K|^{-1/4} \text{\em{II}}_{\text{\em{Euc}}}$ associated to a nondegenerate surface $\Sigma \subset \R^3$ is invariant under the action of the equiaffine group.
\end{corollary}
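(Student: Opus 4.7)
The plan is to observe that the stated identity $\text{I}_{\text{aff}} = |K|^{-1/4}\text{II}_{\text{Euc}}$ from Proposition \ref{affine-first-fundamental-form-prop} reduces the corollary to showing that the quadratic form $\text{I}_{\text{aff}}$, in its original definition via $\ell, m, n$, is equiaffine invariant. This invariance is essentially built into the construction preceding the proposition, so the proof is largely a matter of collecting observations already made.

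First I would verify directly that $\ell, m, n$ are equiaffine invariants of the parametrization $X$. Let $\Phi\in \mathrm{Aff}_{\mathrm{eq}}(\A^3)$ be given by $\Phi(x) = Ax+b$ with $A\in SL(3)$, and set $\tilde X = \Phi\circ X$. Then $\tilde X_u = AX_u$, $\tilde X_v = AX_v$, $\tilde X_{uu} = AX_{uu}$, and similarly for the other second partials (the translation $b$ drops out upon differentiation). Multilinearity of the determinant together with $\det A = 1$ gives
\[
\tilde\ell = \det[AX_u \ \ AX_v \ \ AX_{uu}] = \det(A)\,\ell = \ell,
\]
and analogously $\tilde m = m$, $\tilde n = n$. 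Consequently both the quadratic form $\ell\,du^2 + 2m\,du\,dv + n\,dv^2$ and the scalar $\ell n - m^2$ are pointwise equiaffine invariants.

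Second, I would recall that the normalizing factor $|\ell n - m^2|^{-1/4}$ was introduced precisely to cancel the parametrization-dependent Jacobian factors: the computation preceding Proposition \ref{affine-first-fundamental-form-prop} shows that under a change of surface parameters with Jacobian determinant $J$, the quadratic form $\ell\,du^2 + 2m\,du\,dv + n\,dv^2$ is multiplied by $J$ while $\ell n-m^2$ is multiplied by $J^4$, so $\text{I}_{\text{aff}}$ is well-defined on $\Sigma$ independently of $X$. Combined with the previous step, this shows that $\text{I}_{\text{aff}}$ is an equiaffine invariant of the embedded surface $\Sigma$. Invoking the equality $\text{I}_{\text{aff}} = |K|^{-1/4}\text{II}_{\text{Euc}}$ of Proposition \ref{affine-first-fundamental-form-prop} then yields the Euclidean expression $|K|^{-1/4}\text{II}_{\text{Euc}}$ as an equiaffine invariant, as claimed.

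There is really no main obstacle here: the substantive content is already contained in the proposition and in the invariance/parametrization discussion that motivates the definition of $\text{I}_{\text{aff}}$. The corollary is essentially a bookkeeping statement asserting that a quantity defined purely in terms of Euclidean data happens to coincide with an object whose equiaffine invariance has already been established.
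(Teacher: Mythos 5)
Your proposal is correct and follows essentially the same route as the paper, which states this corollary without a separate proof precisely because it is immediate from the identity $\text{I}_{\text{aff}} = |K|^{-1/4}\,\text{II}_{\text{Euc}}$ of Proposition \ref{affine-first-fundamental-form-prop} together with the equiaffine invariance of $\text{I}_{\text{aff}}$ already built into its definition via $\ell$, $m$, $n$. Your explicit verification that $\ell$, $m$, $n$ are unchanged under $X \mapsto AX + b$ with $\det A = 1$ just fills in a detail the paper leaves implicit.
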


\section{Two arc length functions for curves in surfaces}\label{comparison-sec}

Now suppose that $\alpha:I\to \A^3$ is a curve whose image is contained in a nondegenerate surface $\Sigma = X(U) \subset \A^3$.  The restriction of $\text{I}_{\text{aff}}$ to $\alpha$ defines an arc length function $s_\Sigma$ along $\alpha$, as follows:
\begin{equation}
s_\Sigma(t) = \int_0^t \sqrt{\text{I}_{\text{aff}}(\alpha'(\sigma))}\, d\sigma.
\label{eq:s:sigma}
\end{equation}
We will refer to the function $s_\Sigma$ on $\alpha$ as the {\em induced arc length} function from $\Sigma$.
Although the affine arc length $s_\alpha$ and the induced arc length $s_\Sigma$ are both ``metric functions'' along $\alpha$, in the sense that they allow us to measure the length of a curve embedded in an affine surface, they may or may not agree, even for fairly trivial examples.

\begin{example}\label{sphere-example-1}
Let $\Sigma \subset \R^3$ be the unit sphere $S^2$, with the parametrization
$$
X(u,v) = \left[\cos(u) \cos(v), \sin(u)\cos(v), \sin(v)\right]. 
$$
Regarded as a surface in $\E^3$, $\Sigma$ has uniform Gauss curvature $K=1$ and second fundamental form 
$$
\text{II}_{\text{Euc}} = \cos^2(v) du^2 + dv^2 = \text{I}_{\text{Euc}}.
$$
(In fact, $\Sigma$ is the unique surface in $\E^3$ with the property that
$\text{II}_{\text{Euc}} = \text{I}_{\text{Euc}}$.)
Therefore, the affine first fundamental form of $\Sigma$ is 
\begin{eqnarray}
\text{I}_{\text{aff}} &=& |K^{-1/4}|\, \text{II}_{\text{Euc}}\nonumber\\
&=& \cos^2(v) du^2 + dv^2 \nonumber \\
&=& \text{I}_{\text{Euc}}. \nonumber
\end{eqnarray}
Let $\alpha$ be a great circle on $\Sigma$, parametrized as 
$$\alpha(t) = X(t, 0) = [\cos(t), \sin(t), 0].$$ 
Because $\Sigma$ has the property that $\text{II}_{\text{Euc}} = \text{I}_{\text{Euc}}$, the induced arc length function $s_\Sigma(t)$ agrees with the Euclidean arc length function $\bar{s}(t)$; therefore,
\[ s_\Sigma(t) = t. \]
But because $\alpha$ is contained in a plane, it is considered a degenerate curve in $\A^3$ and its affine arc length function $s_\alpha(t)$ is identically equal to zero.

For a less trivial example where the two arc lengths do not agree, consider the ``spherical helix" curve
\[ \alpha(t) = X(8t,t) = [\cos(8t) \cos(t), \sin(8t) \cos(t), \sin(t)] \]
in $\Sigma$.  Again, the induced arc length function $s_\Sigma(t)$ is equal to the Euclidean arc length function
\[ s_\Sigma(t) = \int_0^t \sqrt{1 + 64 \cos^2(\sigma)}\,d\sigma, \]
while the affine arc length function $s_\alpha(t)$ is
\[ s_\alpha(t) = \int_0^t \sqrt[6]{48\cos(\sigma)(43 + 672 \cos^2(\sigma))}\, d\sigma. \]
The curve $\alp$ along with graphs of the two arc length functions are shows in Figure \ref{spherical-helix-fig}.
\begin{figure}[htb]
\begin{center}
\leavevmode
\includegraphics[width=2in]{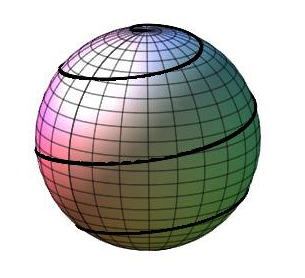}\hspace{0.5in}
\includegraphics[width=2in]{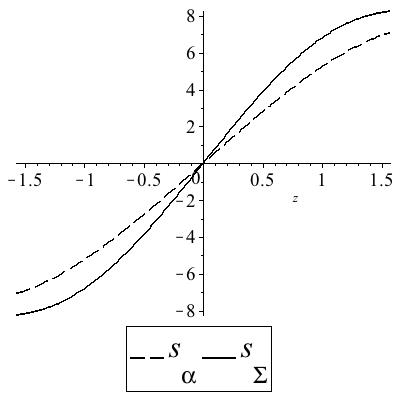}
\caption{The spherical helix of Example \ref{sphere-example-1}}
\label{spherical-helix-fig}
\end{center}
\end{figure}

\end{example}

\begin{example}\label{helicoid-example-1}
Let $\Sigma \subset \R^3$ be the helicoid $H$, with the parametrization
$$X(u,v) = \left[u\cos(v), u\sin(v), v\right].$$
Regarded as a surface in $\E^3$, $\Sigma$ has Gauss curvature $K = -\frac{1}{(u^2+1)^2}$ and second fundamental form
\[ \text{II}_{\text{Euc}} = -\frac{2}{\sqrt{u^2+1}}\, du\, dv. \]
Therefore, the affine first fundamental form of $\Sigma$ is
\[
\text{I}_{\text{aff}} = |K^{-1/4}|\, \text{II}_{\text{Euc}}
= -2\, du\, dv. \]
Let $\alp$ be one of the rulings on $\Sigma$, parametrized as 
\[ \alpha(t) = X(t,v_0) = \left[t \cos(v_0), t \sin(v_0), v_0\right], \]
where $v_0$ is a constant. $\alpha(t)$ is a straight line; thus the vectors $\alpha'$, $\alpha''$, $\alpha'''$ cannot be linearly independent, and the affine arc length $s_\alpha(t)$ is zero.  Moreover, because $\alp$ is an asymptotic curve in $\Sigma$, the restriction of the affine first fundamental form $\text{I}_{\text{aff}}$ to $\alpha'(t)$ is zero. Thus,
$$s_\alpha(t) = s_\Sigma(t) = 0,$$
and the two (degenerate) arc length functions on $\alpha$ coincide.

For a nondegenerate example where the two arc lengths do not agree, consider the ``helical spiral" curve 
\[ \alpha(t) = X(t, \pi t) = \left[t \cos(\pi t), t \sin(\pi t), \pi t\right] \]
in $H$. 
The induced arc length function is
\[ s_{\Sigma}(t) = \int_0^t \sqrt{2\pi}\, d\sigma = \sqrt{2\pi}\, t, \]
while the affine arc length function is
\[ s_{\alpha}(t) = \int_0^t (6\pi^4+\sigma^2\pi^6)^{1/6}\, d\sigma. \]
These functions are qualitatively quite different: $s_\Sigma$ is clearly linear in $t$, while $s_\alpha \sim \pi t^{4/3}.$  The curve $\alp$ along with graphs of the two arc length functions are shows in Figure \ref{helicoid-spiral-fig}.
\begin{figure}[htb]
\begin{center}
\leavevmode
\includegraphics[width=2.3in]{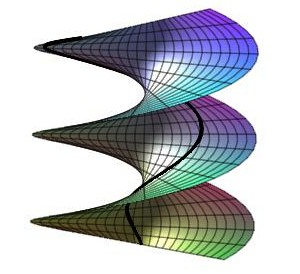}\hspace{0.5in}
\includegraphics[width=2in]{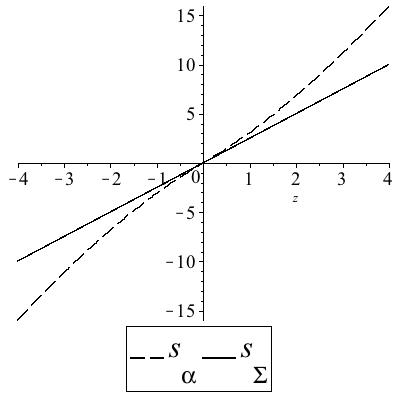}
\caption{The helical spiral of Example \ref{helicoid-example-1}}
\label{helicoid-spiral-fig}
\end{center}
\end{figure}

\end{example}

These examples raise the question: given a surface $\Sigma \subset \A^3$, are there nondegenerate curves in $\Sigma$ for which the two arc length functions $s_\alpha(t), s_\Sigma(t)$ coincide?  We will consider this question in the following section.

\section{Main Theorem}\label{theorem-sec}

In this section, we explore the question of when the affine arc length function $s_\alpha$ and the induced arc length function $s_\Sigma$ for a curve $\alpha \subset \Sigma \subset \A^3$ are equal. This question motivates the following definition.

\begin{definition}
A nondegenerate curve $\alpha$ contained in a regular, nondegenerate surface $\Sigma \subset \A^3$ will be called {\em commensurate} if the affine arc length function $s_\alpha$ and the induced arc length function $s_\Sigma$ associated to $\alpha$ are equal.
\end{definition}

We have seen in \S \ref{comparison-sec} that there exist examples of both commensurate and non-commensurate curves.

\begin{theorem}
Let $X:U\to\R^3$ be a regular parametrization for a nondegenerate surface $\Sigma$, and let $\alpha:I\to\R^3$ be a regular, nondegenerate curve contained in $\Sigma$. Then $\alpha$ is a commensurate curve if and only if, for all $t\in I$,
\begin{equation} 
\det\left[\alpha'(t) \ \ \alpha''(t) \ \ \alpha'''(t)\right] = \left[ \text{\em{I}}_{\text{\em{aff}}}(\alpha'(t)) \right]^3. \label{big-theorem-eqn}
\end{equation}
\label{theorem:main:pretty}
\end{theorem}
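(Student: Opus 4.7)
The plan is to reduce the equality of the two arc-length \emph{functions} to a pointwise equality of their \emph{integrands}, and then to algebraically manipulate that equality into the stated form \eqref{big-theorem-eqn}.

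First, I would note that both $s_\alpha$ and $s_\Sigma$ are defined as integrals from $0$ to $t$ of continuous functions of $\sigma$, so in particular $s_\alpha(0) = s_\Sigma(0) = 0$. Consequently, the equality $s_\alpha(t) = s_\Sigma(t)$ for all $t \in I$ holds if and only if their derivatives agree for all $t \in I$. By the fundamental theorem of calculus applied to \eqref{eq:s:alpha} and \eqref{eq:s:sigma}, this in turn is equivalent to
\[
\sqrt[6]{\det[\alpha'(t) \ \ \alpha''(t) \ \ \alpha'''(t)]} \;=\; \sqrt{\text{I}_{\text{aff}}(\alpha'(t))} \qquad \text{for all } t \in I.
\]

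Next, I would observe that both sides of this identity are nonnegative: the left side by the orientation convention adopted in \S\ref{preliminaries-sec} for nondegenerate curves (so that the $6$th root is real), and the right side because a real square root is being taken, which implicitly requires $\text{I}_{\text{aff}}(\alpha'(t)) \geq 0$ along the curve (automatic in the elliptic case, and a standing assumption on $\alpha$ otherwise). Since both quantities are nonnegative reals, the equation is equivalent to the equality of their sixth powers,
\[
\det[\alpha'(t) \ \ \alpha''(t) \ \ \alpha'''(t)] \;=\; \left[\text{I}_{\text{aff}}(\alpha'(t))\right]^3,
\]
which is precisely \eqref{big-theorem-eqn}. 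Chaining these equivalences proves the theorem.

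The proof is essentially a one-line differentiation argument; the only thing that needs care is the bookkeeping of signs and positivity, to justify that raising to the sixth power is a genuine equivalence rather than merely an implication. There is no substantive obstacle here — the content of the theorem is really that the condition \eqref{big-theorem-eqn} is a pointwise, algebraic reformulation of commensurateness, and its value will only emerge when one uses it in \S\ref{examples-sec} to construct examples.
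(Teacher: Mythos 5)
Your proposal is correct and follows essentially the same route as the paper's proof: reduce equality of the two arc length functions to pointwise equality of the integrands, then raise both sides to the sixth power. Your extra care with the fundamental theorem of calculus and with nonnegativity is a welcome refinement of the same argument (and dovetails with the paper's remark that the cubed form of the condition is what lets one relax the positivity assumption when $\text{I}_{\text{aff}}$ is indefinite).
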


\begin{proof}
From equations \ref{eq:s:sigma} and \ref{eq:Iaff:KIIeuc}, we know that, along $\alpha$, 
$$ 
s_\Sigma(t) = \int_0^t \sqrt{\text{I}_{\text{aff}}(\alpha'(\sigma))}d\sigma 
.
$$
Similarly, from equation \ref{eq:s:alpha}, we know that
$$
s_\alpha(t) = \int_0^t\sqrt[6]{\det\left[\alpha'(\sigma)\ \ \alpha''(\sigma) \ \ \alpha'''(\sigma)\right]} d\sigma.
$$
The curve $\alpha$ is commensurate if and only if, for all $t \in I$, $s_\alpha(t) = s_\Sigma(t)$; i.e., if and only if
\[  \int_0^t \sqrt[6]{\det\left[\alpha'(\sigma) \ \ \alpha''(\sigma) \ \ \alpha'''(\sigma)\right]} 	
	d\sigma
	= \int_0^t \sqrt{\text{I}_{\text{aff}}(\alpha'(\sigma))}d\sigma . 	
\]
This equation holds for all $t$ if and only if the integrands are equal, i.e., if
\begin{eqnarray}
\sqrt[6]{\det\left[\alpha'(t) \ \ \alpha''(t) \ \ \alpha'''(t)\right]}
	&=& \sqrt{\text{I}_{\text{aff}}(\alpha'(t))}\nonumber\\
\Leftrightarrow \qquad  \det\left[\alpha'(t) \ \ \alpha''(t) \ \ \alpha'''(t)\right]
	&=& \left[\text{I}_{\text{aff}}(\alpha'(t)) \right]^3\nonumber
\end{eqnarray}
Thus, $\alpha$ is commensurate if and only if 
$$\det\left[\alpha'(t) \ \ \alpha''(t) \ \ \alpha'''(t)\right] = \left[\text{I}_{\text{aff}}(\alpha'(t)) \right]^3.$$
\end{proof}

\begin{remark}
Expressing the condition of the theorem by equation \eqref{big-theorem-eqn} allows us to relax the assumption that both sides must be positive.  This is useful when the quadratic form $\text{I}_{\text{aff}}$ is indefinite and the right-hand side may take on negative values.
\end{remark}

We can express the condition \eqref{big-theorem-eqn} in terms of the Euclidean invariants of $\alpha$ and $\Sigma$:

\begin{corollary}\label{commensurate-curve-cor}
Let $X:U \to \R^3$ be a regular parametrization for a nondegenerate surface $\Sigma$, and let $\alpha:I\to\R^3$ be a regular, nondegenerate curve contained in $\Sigma$. Then $\alpha$ is a commensurate curve if and only if, for all $t\in I$, 
\[ \kappa(t)^2 \tau(t) = \left( | K(t) |^{-1/4} k_n(t)\right)^3, \]
where $\kappa(t), \tau(t)$ are the Euclidean curvature and torsion functions of $\alpha$, $K(t)$ is the Gauss curvature of $\Sigma$ at the point $\alpha(t)$, and $k_n(t)$ is the normal curvature of $\Sigma$ at the point $\alpha(t)$ in the direction of $\alpha'(t)$.

\end{corollary}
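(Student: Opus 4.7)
The plan is to translate the determinant identity \eqref{big-theorem-eqn} from Theorem \ref{theorem:main:pretty} directly into Euclidean language by substituting the identities already proved in Propositions \ref{affine-arc-length-prop} and \ref{affine-first-fundamental-form-prop}, so that only a bookkeeping computation remains.

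First I would rewrite the left-hand side of \eqref{big-theorem-eqn} using the calculation that appears in the proof of Proposition \ref{affine-arc-length-prop}. Writing $v(t) = \bar{s}'(t)$ and expanding $\alpha'$, $\alpha''$, $\alpha'''$ in the Euclidean Frenet frame yields
\[
\det[\alpha'(t)\ \ \alpha''(t)\ \ \alpha'''(t)] = v(t)^6\,\kappa(t)^2\,\tau(t),
\]
since $\det[\e_1\ \e_2\ \e_3]=1$. Nothing new is needed here; this is exactly the intermediate identity already established in that earlier proof, so I can simply quote it.

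Next I would handle the right-hand side. By Proposition \ref{affine-first-fundamental-form-prop}, $\text{I}_{\text{aff}} = |K|^{-1/4}\,\text{II}_{\text{Euc}}$, so evaluating at $\alpha'(t)$ gives $\text{I}_{\text{aff}}(\alpha'(t)) = |K(t)|^{-1/4}\,\text{II}_{\text{Euc}}(\alpha'(t))$. The key observation at this step is the definition of the normal curvature: for a tangent vector $\mathbf{w}$ to $\Sigma$,
\[
k_n(\mathbf{w}) = \frac{\text{II}_{\text{Euc}}(\mathbf{w})}{\text{I}_{\text{Euc}}(\mathbf{w})},
\]
and since $\text{I}_{\text{Euc}}(\alpha'(t)) = \langle \alpha'(t), \alpha'(t)\rangle = v(t)^2$, we get $\text{II}_{\text{Euc}}(\alpha'(t)) = v(t)^2\,k_n(t)$. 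Therefore
\[
\bigl[\text{I}_{\text{aff}}(\alpha'(t))\bigr]^3 = |K(t)|^{-3/4}\,v(t)^6\,k_n(t)^3 = v(t)^6\,\bigl(|K(t)|^{-1/4}k_n(t)\bigr)^3.
\]

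Finally I would combine the two sides of \eqref{big-theorem-eqn}. Both expressions carry a common factor of $v(t)^6$, and since $\alpha$ is a regular curve this factor is strictly positive and may be cancelled. Equation \eqref{big-theorem-eqn} thus becomes
\[
\kappa(t)^2\,\tau(t) = \bigl(|K(t)|^{-1/4}\,k_n(t)\bigr)^3
\]
pointwise on $I$, which is the claimed condition. I do not anticipate any real obstacle; the whole argument is a substitution, and the only point requiring a moment's care is remembering that the natural ratio producing $k_n$ introduces the factor $v^2$, which then matches cleanly with the $v^6$ coming from the determinant.
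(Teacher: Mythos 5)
Your proposal is correct and follows essentially the same route as the paper's proof: both rewrite the left side of \eqref{big-theorem-eqn} as $\kappa^2\tau\,\|\alpha'\|^6$ via Proposition \ref{affine-arc-length-prop}, rewrite the right side as $\bigl(|K|^{-1/4}k_n\bigr)^3\|\alpha'\|^6$ via Proposition \ref{affine-first-fundamental-form-prop} and the definition of normal curvature, and cancel the nonzero factor $\|\alpha'\|^6 = v^6$ using regularity of $\alpha$.
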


\begin{proof}
By Proposition \ref{affine-arc-length-prop}, the left-hand side of \eqref{big-theorem-eqn} is equal to 
\[ \kappa(t)^2 \tau(t) \| \alpha'(t) \|^6. \]
By Proposition \ref{affine-first-fundamental-form-prop} and the fact that normal curvature is defined by
\[ k_n(t) = \frac{1}{\|\alpha'(t)\|^2}\text{II}_{\text{Euc}}(\alpha'(t)), \]
the right-hand side of \eqref{big-theorem-eqn} is equal to
\[ \left(|K(t)|^{-1/4} \text{II}_{\text{Euc}}(\alp'(t))\right)^3 =  \left(|K(t)|^{-1/4} k_n(t) \| \alpha'(t) \|^2\right)^3 =  \left(|K(t)|^{-1/4} k_n(t) \right)^3\| \alpha'(t) \|^6.\]
Since $\alp$ is a assumed to be regular, and hence $\|\alp'(t)\| \neq 0$, it follows that
\[ \kappa(t)^2 \tau(t) = \left( | K(t) |^{-1/4} k_n(t)\right)^3. \]
\end{proof}

Theorem \ref{theorem:main:pretty} not only gives us a condition on when a curve is commensurate; it also guarantees the existence of commensurate curves on any nondegenerate surface $\Sigma$.

\begin{corollary}
Let $X:U \to \R^3$ be a regular parametrization for a nondegenerate surface $\Sigma$.  Given any point $\bfx \in \Sigma$ and any tangent vector $\bfv \in T_{\bfx}\Sigma$ for which $\text{{\em I}}_{\text{{\em aff}}}(\bfv) \neq 0$, there exists a 1-parameter family of commensurate curves $\alpha$ in $\Sigma$ such that $\alpha(0) = \bfx$ and $\alpha'(0) = \bfv$.
\label{cor:existence}
\end{corollary}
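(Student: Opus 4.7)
The plan is to reduce the commensurate condition of Theorem \ref{theorem:main:pretty} to a single third-order scalar ODE describing the trace of $\alpha$ on $\Sigma$, and then invoke the standard Picard--Lindel\"of existence theorem. Since equation \eqref{big-theorem-eqn} is invariant under reparametrization of $\alpha$, I would look for candidate commensurate curves in the convenient graph form $\tilde{\alpha}(u) = X(u, v(u))$ and rescale the parameter only at the end to realize the prescribed initial velocity $\bfv$.

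First I would choose local coordinates on $\Sigma$ so that $\bfx = X(u_0, v_0)$ and write $\bfv = \lambda X_u(\bfx) + \mu X_v(\bfx)$. After relabeling the coordinates $(u,v)$ on $U$ if necessary, I may assume $\lambda \neq 0$, since $\bfv \neq 0$. I then restrict attention to curves of the form $\tilde{\alpha}(u) = X(u, v(u))$ subject to the initial conditions $v(u_0) = v_0$ and $v'(u_0) = \mu/\lambda$, so that $\tilde{\alpha}(u_0) = \bfx$ and $\tilde{\alpha}'(u_0) = (1/\lambda)\,\bfv$.

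Next I would expand
\begin{align*}
\tilde{\alpha}'(u) &= X_u + X_v\, v', \\
\tilde{\alpha}''(u) &= X_{uu} + 2 X_{uv}\, v' + X_{vv}\, (v')^2 + X_v\, v'', \\
\tilde{\alpha}'''(u) &= X_v\, v''' + R(u, v, v', v''),
\end{align*}
substitute into \eqref{big-theorem-eqn}, and track the dependence on $v'''$. A short determinant calculation shows that the coefficient of $v'''$ on the left-hand side equals $-\det[X_u\ X_v\ \tilde{\alpha}''] = -(\ell + 2 m v' + n (v')^2) = -|\ell n - m^2|^{1/4}\, \text{I}_{\text{aff}}(\tilde{\alpha}'(u))$, whereas the right-hand side $[\text{I}_{\text{aff}}(\tilde{\alpha}'(u))]^3$ depends only on $v$ and $v'$. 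The nondegeneracy of $\Sigma$ together with the hypothesis $\text{I}_{\text{aff}}(\bfv) \neq 0$ ensures this coefficient is nonzero at $u = u_0$ and, by continuity, on a neighborhood; the implicit function theorem then converts the commensurate condition into an explicit smooth third-order ODE $v'''(u) = H(u, v(u), v'(u), v''(u))$ defined near $u_0$.

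Finally I would apply Picard--Lindel\"of with the initial conditions $v(u_0) = v_0$ and $v'(u_0) = \mu/\lambda$ fixed by $(\bfx, \bfv)$, and with $v''(u_0) = c$ a free real parameter. Each $c \in \R$ produces a unique local solution $v_c$, and reparametrizing via $\alpha_c(t) := X(u_0 + \lambda t,\ v_c(u_0 + \lambda t))$ yields $\alpha_c(0) = \bfx$ and $\alpha_c'(0) = \lambda\,\tilde{\alpha}_c'(u_0) = \bfv$; by the reparametrization invariance of \eqref{big-theorem-eqn}, each $\alpha_c$ is commensurate, producing the desired 1-parameter family. The main obstacle in this plan is verifying that the leading coefficient of $v'''$ is nonzero so that the reduction to an ODE is legitimate, which is exactly the step that forces the hypothesis $\text{I}_{\text{aff}}(\bfv) \neq 0$.
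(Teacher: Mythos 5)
Your proposal is correct and follows essentially the same route as the paper: reduce the commensurate condition \eqref{big-theorem-eqn} to a third-order scalar ODE for $v$ along a graph-type parametrization and invoke the local existence/uniqueness theorem with $v''(0)=c$ as the free parameter generating the family. You are in fact somewhat more careful than the paper's own proof, which never checks that \eqref{big-theorem-eqn} can be solved for the top derivative; your computation that the coefficient of $v'''$ is $-(\ell + 2m v' + n(v')^2) = -|\ell n - m^2|^{1/4}\,\text{I}_{\text{aff}}(\tilde{\alpha}')$ is precisely the step that makes the hypothesis $\text{I}_{\text{aff}}(\bfv)\neq 0$ do its work.
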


\begin{remark}
The hypothesis $\text{I}_{\text{aff}}(\bfv) \neq 0$ is not strictly necessary if we extend our definitions to degenerate curves in the obvious way.
\end{remark}

\begin{proof}
Let $\bfx = X(u_0, v_0)$ and $\bfv = a X_u + b X_v$.
We can write 
\begin{equation}
 \alpha(t) = X(u(t), v(t)) \label{local-param-assumption}
\end{equation}
for some smooth functions $u(t), v(t)$. The condition \eqref{big-theorem-eqn} is invariant under reparametrizations of $\alpha$, so without loss of generality we may assume (locally) that $u(t) = u_0 + at$, and therefore
\[ \alpha(t) = X(u_0 + at, v(t)). \]
Equation \eqref{big-theorem-eqn} then becomes a 3rd-order nonlinear ODE for the function $v(t)$. The conditions $\alpha(0) = \bfx$, $\alpha'(0) = \bfv$ are equivalent to the initial conditions
\[ v(0) = v_0, \qquad v'(0) = b \]
for the function $v(t)$.
The local existence/uniqueness theorem for ODEs guarantees that for any real number $c$, there exists a unique local solution to \eqref{big-theorem-eqn} with
\[ v(0) = v_0, \qquad v'(0) = b, \qquad v''(0) = c. \]
\end{proof}

For example, if we set $u_0 = 0, a=1$, so that
\[ \alpha(t) = X(t, v(t)), \]
then equation \eqref{big-theorem-eqn} becomes:
\begin{equation}
\det\left[\frac{d}{dt}X(t,v(t)) \ \ \frac{d^2}{dt^2}X(t,v(t)) \ \ \frac{d^3}{dt^3}X(t,v(t))\right]
	= 
	\left[\text{I}_{\text{aff}}\left(\frac{d}{dt}X(t,v(t))\right)\right]^3 . \label{big-equation-for-v}
\end{equation}

For most surfaces $X$, equation \eqref{big-equation-for-v} is highly nonlinear and cannot be solved explicitly for $v(t)$.  
However, we can often numerically solve for commensurate curves and examine them qualitatively.
In \S \ref{examples-sec}, we will use this process to compute examples of commensurate curves on various surfaces.

\section{Examples}\label{examples-sec}

\begin{example}\label{cool-curves-on-sphere-ex}
Let $\Sigma \subset \R^3$ be the unit sphere, and let $\alpha$ be a commensurate curve on $\Sigma$.  For simplicity, assume that $\alpha$ is parametrized by its Euclidean arc length $\bar{s}$. 
Since all normal curvatures on $\Sigma$ are equal to 1, Corollary \ref{commensurate-curve-cor} implies that the curve $\alpha(\bar{s})$ on $\Sigma$ is commensurate if and only if its curvature and torsion satisfy
\begin{equation}
 \kappa(\bar{s})^2 \tau(\bar{s}) \equiv 1. \label{sphere-example-eqn}
\end{equation}
Moreover, the fact that $\alpha$ lies on the unit sphere implies that
\[ \left( \frac{1}{\kappa(\bar{s})} \right)^2 + \left(\frac{1}{\tau(\bar{s})}\, \frac{d}{ds} \left( \frac{1}{\kappa(\bar{s})} \right)   \right)^2 = 1. \]
(See Exercise 1.3.24 of \cite{Oprea07}.)
Together, these two equations imply that $\kappa(\bar{s})$ satisfies the ODE
\[ (\kappa'(\bar{s}))^2 = \frac{\kappa(\bar{s})^2 - 1}{\kappa(\bar{s})^2}. \]
The general solution of this equation is
\[ \kappa(\bar{s}) = \pm \sqrt{(\bar{s} + c)^2 + 1}, \]
where $c \in \R$.
Since $\kappa(\bar{s})$ is assumed to be positive and $\bar{s}$ is only well-defined up to an additive constant, we may assume without loss of generality that
\[ \kappa(\bar{s}) = \sqrt{\bar{s}^2 + 1}, \]
and then equation \eqref{sphere-example-eqn} implies that 
\[ \tau(\bar{s}) = \frac{1}{\bar{s}^2 + 1}. \]
Unfortunately, the corresponding Frenet equations cannot be integrated analytically, but we can integrate them numerically to obtain the curve shown in Figure \ref{sphere-curves-fig}.  Every other commensurate curve on the sphere can be obtained by translating and rotating this one.

\begin{figure}[htb]
\begin{center}
\leavevmode
\includegraphics[width=2in]{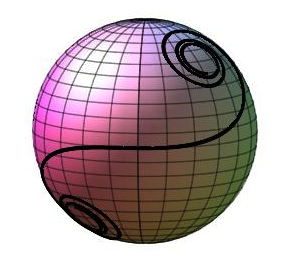} \hspace{0.5in}
\includegraphics[width=2in]{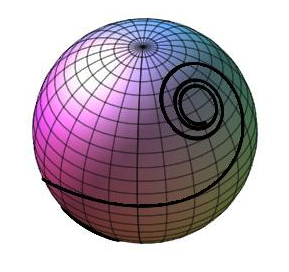}
\caption{Two views of a commensurate curve on the sphere}
\label{sphere-curves-fig}
\end{center}
\end{figure}

\begin{remark}
Because all normal curvatures on the unit sphere are equal to 1 and the commensurate curves have curvature function $\kappa(\bar{s}) = \sqrt{\bar{s}^2 + 1}$, they have {\em geodesic} curvature function $\kappa_g(\bar{s}) = \bar{s}$.  Thus the commensurate curves are the spherical analogs of plane curves with curvature $\kappa(\bar{s}) = \bar{s}$.  These plane curves are called {\em Euler spirals} or {\em clothoid} curves (see, e.g., \cite{AS64}), and they have a long and interesting history.  They first appeared as the solution to an elasticity problem posed in 1694 by James Bernoulli \cite{Bernoulli64}, then in work of Augustin Fresnel in 1816 regarding the problem of light diffracting through a slit \cite{Fresnel16}, and again in work of Arthur Talbot in 1901 related to designing railroad tracks so as to provide as smooth a riding experience as possible \cite{Talbott01}.  A nice account of the history of Euler spirals is given in \cite{Levien08}.
\end{remark}

\end{example}

For the remaining examples in this section, we computed commensurate curves as follows.  Example \ref{cool-curves-on-sphere-ex} illustrates how the local parametrization \eqref{local-param-assumption} may be too limiting, as it assumes that the curve is never tangent to the $v$-parameter curves of $\Sigma$ and so may only be accurate for computing small segments of the curve.  In order to remedy this weakness, for the remaining examples we assume that the curve is parametrized as
\[ \alp(t) = X(u(t), v(t)), \]
where
\begin{equation}
 u'(t) = \cos(\theta(t)), \qquad v'(t) = \sin(\theta(t)) \label{part-of-ode-system}
\end{equation}
for some unknown function $\theta(t)$.  Then, for a given parametrization 
$X(u,v)$ of $\Sigma$, equation \eqref{big-theorem-eqn} becomes a second-order ODE for the function $\theta(t)$, with coefficients depending on the functions $u(t), v(t)$.  We used the Rosenbrock stiff algorithm in {\sc Maple} 15 to numerically solve the system consisting of this ODE together with equations \eqref{part-of-ode-system} for various choices of initial conditions in order to generate the curves in the following examples.

\begin{example}
Let $\Sigma \subset \R^3$ be a paraboloid, parametrized via a Monge patch in polar coordinates as
\[ X(u,v)  = \left[v\cos (u), v\sin (u),v^2  \right]. \] 
Some commensurate curves on $\Sigma$ are shown in Figure \ref{paraboloid-curves-fig}. 
\begin{figure}[htb]
\begin{center}
\leavevmode
\includegraphics[width=2in]{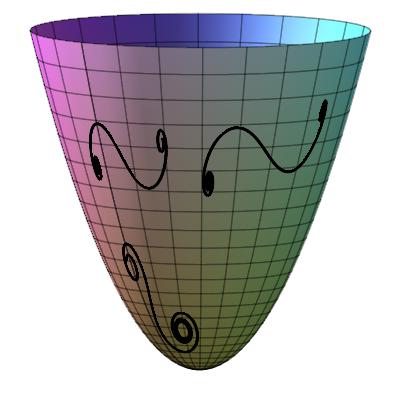} 
\caption{Commensurate curves on the paraboloid}
\label{paraboloid-curves-fig}
\end{center}
\end{figure}
As one might expect since the paraboloid is an elliptic surface, these curves appear qualitatively similar to the commensurate curves on the sphere.

\end{example}

In our next three examples, we compute some examples of nondegenerate commensurate curves on hyperbolic surfaces.  Such curves can never be tangent to an asymptotic direction, since these are precisely the null directions for $\text{I}_{\text{aff}}$.  (If an asymptotic curve happens to be contained in a plane, as is the case for any straight line contained in a surface, then the curve is technically commensurate, but then it is also degenerate.)  Experimentally, we observe in all three cases that commensurate curves tend to approach asymptotic tangent directions fairly quickly, and that the numerical integration algorithm breaks down when the tangent vector to the curve gets too close to an asymptotic direction.

\begin{example}
Let $\Sigma \subset \R^3$ be a hyperbolic paraboloid, parametrized as 
\[ X(u,v) = \left[u ,v, u v  \right], \] 
so that the coordinate curves are precisely the two families of straight lines in $\Sigma$.
Some commensurate curves on $\Sigma$ are shown in  Figure \ref{hyperbolic-paraboloid-curves-fig}.

\begin{figure}[htb]
\begin{center}
\leavevmode
\includegraphics[width=2in]{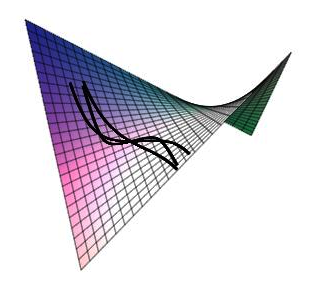}\caption{Commensurate curves on the hyperbolic paraboloid}
\label{hyperbolic-paraboloid-curves-fig}
\end{center}
\end{figure}

\end{example}

\begin{example}
Let $\Sigma \subset \R^3$ be a hyperboloid, parametrized as
 \[ X(u,v)  = \left[\cos(u)-v\sin(u),\sin(u)+v\cos(u),v \right],\] 
so that the $v$ coordinate curves are one of the two families of straight lines in $\Sigma$.
Some commensurate curves on $\Sigma$ are shown in  Figure \ref{hyperboloid-curves-fig}.

\begin{figure}[htb]
\begin{center}
\leavevmode
\includegraphics[width=2in]{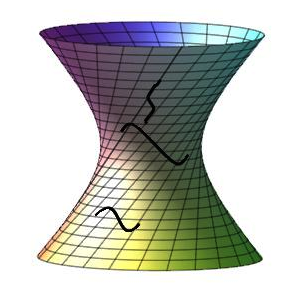} 
\caption{Commensurate curves on the hyperboloid}
\label{hyperboloid-curves-fig}
\end{center}
\end{figure}

\end{example}

\begin{example}
Let $\Sigma \subset \R^3$ be a helicoid, parameterized as
 \[ X(u,v)  = \left[u\cos(v), u\sin(v), v \right]. \] 
Some commensurate curves on $\Sigma$ are shown in  Figure \ref{helicoid-curves-fig}.

\begin{figure}[htb]
\begin{center}
\leavevmode
\includegraphics[width=2in]{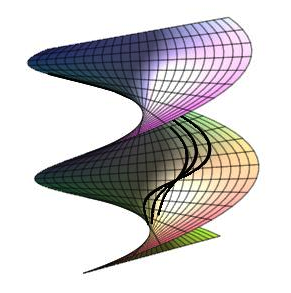} \hspace{0.5in}
\includegraphics[width=2in]{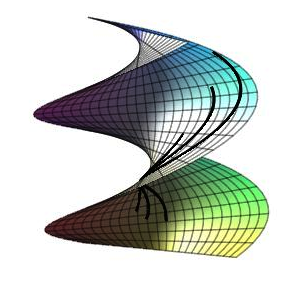} 
\caption{Commensurate curves on the helicoid}
\label{helicoid-curves-fig}
\end{center}
\end{figure}

\end{example}




\bibliographystyle{amsplain}
\bibliography{TaleOfTwoArcLengths-bib}

\end{document}